\documentclass{amsart}
\usepackage{latexsym,amssymb,amsmath,amsthm,amscd,graphicx}
\usepackage{longtable}
\usepackage{enumerate}

\numberwithin{equation}{section}

\theoremstyle{plain}
\newtheorem{thm}{Theorem}[section]
\newtheorem*{thm*}{Theorem}

\theoremstyle{definition}

\theoremstyle{remark}

\newtheorem*{xrem}{Remark}

\newcommand{\cD}{{\mathcal D}}

\newcommand{\cG}{{\mathcal G}}

\newcommand{\cM}{{\mathcal M}}

\newcommand{\cQ}{{\mathcal Q}}

\newcommand{\cS}{{\mathcal S}}

\newcommand{\kI}{{\mathfrak I}}

\newcommand{\kM}{{\mathfrak M}}

\newcommand{\N}{{\mathbb N}}

\newcommand{\R}{{\mathbb R}}

\newcommand{\Z}{{\mathbb Z}}

\def\al{\alpha}
\def\bt{\beta}

\def\tht{\theta}

\def\0{\emptyset}
\def\1{{\bf 1}}
\def\6{\partial}
\def\8{\infty}

\def\lt{\left}
\def\rt{\right}

\newcommand{\iii}[1]{{\left\vert\kern-0.25ex\left\vert\kern-0.25ex\left\vert #1 
    \right\vert\kern-0.25ex\right\vert\kern-0.25ex\right\vert}}

 \def\XXint#1#2#3{{\setbox0=\hbox{$#1{#2#3}{\int}$}
 \vcenter{\hbox{$#2#3$}}\kern-.5\wd0}}

\begin{document}

\title{The Adams trace theorem for product Morrey spaces}

\author[N.~Hatano]{Naoya Hatano}
\address{
Graduate School of Information Science and Technology, The University of Osaka, 1-5, Yamadaoka, Suita-shi, Osaka 565-0871, Japan
}
\email{n.hatano.chuo@gmail.com}

\author[R.~Kawasumi]{Ryota Kawasumi}
\address{
Center for Mathematics and Data Science, Gunma University, 
4-2 Aramaki-machi, Maebashi City, Gunma 371-8510, Japan 
}
\email{r-kawasumi@gunma-u.ac.jp}

\author[H.~Saito]{Hiroki Saito}
\address{
College of Science and Technology, Nihon University,
Narashinodai 7-24-1, Funabashi City, Chiba, 274-8501, Japan
}
\email{saitou.hiroki@nihon-u.ac.jp}

\author[H.~Tanaka]{Hitoshi Tanaka}
\address{
Research and Support Center on Higher Education for the hearing and Visually Impaired, 
National University Corporation Tsukuba University of Technology,
Kasuga 4-12-7, Tsukuba City, Ibaraki, 305-8521 Japan
}
\email{htanaka@k.tsukuba-tech.ac.jp}

\thanks{}

\subjclass[2010]{Primary 42B25; Secondary 35J10, 47A55, 42B35.}

\keywords{
Adams trace inequality;
Hedberg type inequality;
multilinear fractional integral operators;
multilinear fractional maximal operators;
product Morrey spaces.
}

\date{}

\begin{abstract}
By using a~Hedberg-type inequality, 
the Adams trace inequality is extended 
from Lebesgue spaces to product Morrey spaces.
\end{abstract}

\maketitle

\section{Introduction}\label{sec1}
In this paper 
we investigate the Adams trace inequality for product Morrey spaces.
Let $\R^n$ be the classical $n$-dimensional Euclidean space. 
The \textit{fractional integral operator}
$I_{\al}$, $0<\al<n$, 
is defined by
\[
I_{\al}f(x)
:=
\int_{\R^n}\frac{f(y)}{|x-y|^{n-\al}}\,{\rm d}y,
\quad x\in\R^n,
\]
and the \textit{fractional maximal operator} 
$M_{\al}$, $0\le\al<n$, 
is defined by
\[
M_{\al}f(x)
:=
\sup_{x\in Q\in\cQ}
\ell_{Q}^{\al-n}
\int_{Q}|f(y)|\,{\rm d}y,
\quad x\in\R^n.
\]
Here, we use the notation 
$\cQ$ to denote the family of all cubes in $\R^n$ 
with sides parallel to the coordinate axes, 
$|Q|$ to denote the volume of $Q$ and 
$\ell_{Q}:=|Q|^{1/n}$.
When $\al=0$,
we simply write $M_0=M$ 
which is the classical Hardy-Littlewood maximal operator.

Let $0<p\le p_0<\8$. 
For an $L^p$ locally integrable function $f$ on $\R^n$ we set
\[
\|f\|_{\cM^{p,\,p_0}}
:=
\sup_{Q\in\cQ}
|Q|^{1/p_0}
\lt(\frac{1}{|Q|}\int_{Q}|f(x)|^p\,{\rm d}x\rt)^{1/p}.
\]
We will call the (classical) \textit{Morrey space} 
$\cM^{p,\,p_0}(\R^n)=\cM^{p,\,p_0}$ 
the subset of all $L^p$ locally integrable functions $f$ on $\R^n$ for which 
$\|f\|_{\cM^{p,\,p_0}}$ 
is finite. 
Applying H\"{o}lder's inequality,
we see that
\[
\|f\|_{\cM^{p_1,\,p_0}}
\ge
\|f\|_{\cM^{p_2,\,p_0}}
\quad\text{for all}\quad
p_0\ge p_1\ge p_2>0.
\]
This tells us that
\[
L^{p_0}=\cM^{p_0,\,p_0}
\subset\cM^{p_1,\,p_0}
\subset\cM^{p_2,\,p_0}
\quad\text{for all}\quad
p_0\ge p_1\ge p_2>0.
\]
Morrey spaces, 
introduced by C.~Morrey to study regularity questions
arising in the Calculus of Variations, 
describe local regularity more precisely than Lebesgue spaces 
and are widely used not only in harmonic analysis 
but also in partial differential equations 
(cf.~\cite{GT}).

The (well-known) Hardy-Littlewood-Sobolev theorem asserts that 
the Lebesgue space norm inequality
\[
\|I_{\al}f\|_{L^q(\R^n)}
\lesssim
\|f\|_{L^p(\R^n)}
\]
holds when 
$1<p<\frac{n}{\al}$ 
and $q=\frac{n}{n-\al p}p$.

In \cite{CF}, 
Chiarenza and Frasca showed that 
the Morrey space norm inequality
\[
\|I_{\al}f\|_{\cM^{q,\,q_0}(\R^n)}
\lesssim
\|f\|_{\cM^{p,\,p_0}(\R^n)}
\]
holds when 
$1<p\le p_0<\frac{n}{\al}$, 
$q=\frac{n}{n-\al p_0}p$ 
and 
$q_0=\frac{n}{n-\al p_0}p_0$.

Suppose that $\mu$ is a~nonnegative Radon measure on $\R^n$. 
Given any $\bt \in (0, n]$, define
\[
\|\mu\|_{\bt}
:=
\sup_{Q\in\cQ}
\frac{\mu(Q)}{\ell_{Q}^{\bt}}.
\]
Applying the Marcinkiewicz interpolation method, 
Adams in \cite{Ad} proved that 
the trace inequality 
\[
\|I_{\al}f\|_{L^q(\R^n,\,\mu)}
\lesssim
\|\mu\|_{n-\bt p}^{1/q}
\|f\|_{L^p(\R^n,\,{\rm d}x)}
\]
holds when
$1<p<\frac{n}{\al}$,
$0<\bt<\al<n$ and 
$q=\frac{n-\bt p}{n-\al p}p$.

The purpose of this paper is 
to extend this Adams trace inequality 
to product Morrey spaces.

Let $m\in\N$. 
For any vector-valued function
\[
\vec{f}
:=
(f_1,\ldots,f_m),
\]
with each $f_i$ being a locally integrable function in $\R^n$, 
and for any $x\in\R^n$, 
the \textit{$m$-linear fractional integral operator}
$\kI_{\al}$, $0<\al<mn$, 
is defined by
\[
\kI_\al(\vec{f})(x)
:=
\int_{(\R^n)^m}
\frac
{\prod_{j=1}^mf_j(y_j)}
{(|x-y_1|+|x-y_2|+\cdots+|x-y_m|)^{mn-\al}}
\,{\rm d}y_1{\rm d}y_2\cdots{\rm d}y_m,
\]
and the \textit{$m$-sublinear fractional maximal operator}
$\kM_{\al}$, $0\le \al<mn$, 
is defined by
\[
\kM_\al(\vec{f})(x)
:=
\sup_{x\in Q\in\cQ}
\ell_{Q}^{\al-mn}
\prod_{j=1}^m
\int_{Q}|f_j(y)|\,{\rm d}y.
\]
Given a~vector-valued exponent 
$\vec{P}:=(p_1,\ldots,p_m)$ 
with 
$p_1,\ldots,p_m\in(1, \8)$, let 
$1/p=1/p_1+\cdots+1/p_m$ and 
let $0<p\le p_0<\8$.
The \textit{product Morrey space} 
$\cM^{\vec{P},\,p_0}((\R^n)^m)$
is defined as the set of all 
$\vec{f}=(f_1,\ldots,f_m)$ 
satisfying
\begin{align*}
\|\vec{f}\|_{\cM^{\vec{P},\,p_0}((\R^n)^m)}
&:=
\sup_{Q\in\cQ}
|Q|^{1/p_0}
\prod_{j=1}^m
\lt(\frac{1}{|Q|}\int_{Q}|f_j(y)|^{p_j}\,{\rm d}y\rt)^{1/p_j}
\\ &=
\sup_{Q\in\cQ}
|Q|^{1/p_0-1/p}
\prod_{j=1}^m
\|f_j\|_{L^{p_j}(Q)}
<\8.
\end{align*}
Let $\mu$ be a~nonnegative Radon measure on $\R^n$. 
For $0<q\le q_0<\8$,
the \textit{Radon-Morrey space} 
$\cM_{\mu}^{q,\,q_0}(\R^n)$
consists of all $\mu$-measurable functions $f$ on $\R^n$ 
such that
\[
\|f\|_{\cM_{\mu}^{q,\,q_0}(\R^n)}
:=
\sup_{Q\in\cQ}
|Q|^{1/q_0-1/q}
\lt(\int_{Q}|f|^q\,{\rm d}\mu\rt)^{1/q}
<\8.
\]
The following are our main theorems, 
whose proofs are the focus of this paper.

\begin{thm}\label{thm1.1}
Let $\mu$ be a~nonnegative Radon measure on $\R^n$.
Let 
$\vec{P}:=(p_1,\ldots,p_m)$ with 
$p_1,\ldots,p_m\in(1, \8)$ and 
$1/p=1/p_1+\cdots+1/p_m$.
Assume that 
$1<p\le p_0<\frac{n}{\al}$ and 
$0<\bt<\al<mn$. Set 
$\tht=\frac{n-\bt p_0}{n-\al p_0}$,
$q=\tht p$ and $q_0=\tht p_0$.
Then we have
\[
\|\kI_\al(\vec{f})\|_{\cM_{\mu}^{q,\,q_0}(\R^n)}
\lesssim
\|\mu\|_{n-\bt p}^{1/q}
\|\vec{f}\|_{\cM^{\vec{P},\,p_0}((\R^n)^m)}.
\]
\end{thm}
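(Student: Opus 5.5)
The plan is to reduce the trace inequality for $\kI_\al$ to one for the multilinear fractional maximal operator $\kM_\bt$. The reduction goes through a Hedberg-type pointwise inequality, and the maximal inequality is then proved separately by localization.

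\emph{Step 1: Hedberg-type inequality.} Discretize the kernel. Since $|x-y_1|+\cdots+|x-y_m|\sim 2^k$ forces every $y_j$ into a cube centred at $x$ of side $\sim 2^{k}$, we get
\[
\kI_\al(\vec f)(x)\lesssim\sum_{k\in\Z}\ell_{Q_k}^{\al-mn}\prod_{j=1}^m\int_{Q_k}|f_j|,
\]
where $Q_k$ is the cube centred at $x$ with $\ell_{Q_k}=2^k$. Fix $r>0$ and split the sum into two parts.

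For the terms with $2^k\le r$, write $\ell^{\al-mn}=\ell^{\al-\bt}\ell^{\bt-mn}$. Each term is then at most $2^{k(\al-\bt)}\kM_\bt(\vec f)(x)$. Since $\al>\bt$, these terms sum to $\lesssim r^{\al-\bt}\kM_\bt(\vec f)(x)$.

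For the terms with $2^k>r$, apply H\"older's inequality in each factor and then the definition of the product Morrey norm:
\[
\ell^{\al-mn}\prod_j\int_{Q}|f_j|\le \ell^{\al-mn}\,\ell^{mn-n/p}\prod_j\|f_j\|_{L^{p_j}(Q)}\le \ell^{\al-n/p_0}\|\vec f\|_{\cM^{\vec P,p_0}}.
\]
Since $\al<n/p_0$, these terms sum to $\lesssim r^{\al-n/p_0}\|\vec f\|$.

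Optimizing in $r$, and noting that $(n/p_0-\al)/(n/p_0-\bt)=1/\tht$, gives
\[
\kI_\al(\vec f)(x)\lesssim \kM_\bt(\vec f)(x)^{1/\tht}\,\|\vec f\|_{\cM^{\vec P,p_0}}^{1-1/\tht}.
\]

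\emph{Step 2: reduction.} Raise the inequality of Step 1 to the power $q=\tht p$, integrate over a cube $Q$ with respect to $\mu$, and use $1/q_0-1/q=\frac1\tht(1/p_0-1/p)$. This gives
\[
\|\kI_\al(\vec f)\|_{\cM_\mu^{q,q_0}}\lesssim \|\kM_\bt(\vec f)\|_{\cM_\mu^{p,p_0}}^{1/\tht}\|\vec f\|^{1-1/\tht}.
\]
It therefore suffices to prove
\[
\|\kM_\bt(\vec f)\|_{\cM_\mu^{p,p_0}}\lesssim\|\mu\|_{n-\bt p}^{1/p}\|\vec f\|_{\cM^{\vec P,p_0}}. \tag{$*$}
\]
Indeed, $(*)$ raised to the power $1/\tht$ yields exactly the factor $\|\mu\|_{n-\bt p}^{1/(p\tht)}=\|\mu\|_{n-\bt p}^{1/q}$ in the statement.

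\emph{Step 3: proof of $(*)$.} Fix a cube $Q$ and split $f_j=f_j\1_{3Q}+f_j\1_{(3Q)^c}$. By sublinearity, $\kM_\bt(\vec f)$ is then bounded by a sum of $2^m$ terms.

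Any term containing at least one factor $f_j\1_{(3Q)^c}$ only involves cubes $R\ni x\in Q$ with $\ell_R\gtrsim\ell_Q$. By the H\"older computation of Step 1 with $\bt$ in place of $\al$, and using $\bt<n/p_0$, such a term is $\lesssim\ell_Q^{\bt-n/p_0}\|\vec f\|$ pointwise on $Q$. Integrating over $Q$ with $\mu(Q)\le\|\mu\|_{n-\bt p}\ell_Q^{n-\bt p}$ gives exactly $\|\mu\|_{n-\bt p}^{1/p}|Q|^{1/p-1/p_0}\|\vec f\|$, which is the required bound.

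The local term $\kM_\bt(\vec f\1_{3Q})$ needs a Lebesgue-space trace estimate
\[
\|\kM_\bt(\vec g)\|_{L^p(\mu)}\lesssim\|\mu\|_{n-\bt p}^{1/p}\prod_j\|g_j\|_{L^{p_j}}.
\]
Once this is available, the Morrey bound $\prod_j\|f_j\|_{L^{p_j}(3Q)}\lesssim|Q|^{1/p-1/p_0}\|\vec f\|$ finishes the local term.

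\emph{Step 4: the Lebesgue trace estimate (main obstacle).} I expect this step to be the main difficulty. Set $\bt_j=\bt p/p_j$, so that $\sum_j\bt_j=\bt$ and $\bt_jp_j=\bt p$ for every $j$. Then $\ell^{\bt-mn}\prod_j\int_R|g_j|=\prod_j\ell^{\bt_j-n}\int_R|g_j|$, which gives the pointwise bound
\[
\kM_\bt(\vec g)\le\prod_j M_{\bt_j}g_j.
\]
By H\"older's inequality in $L^p(\mu)$ with exponents $p_j/p$, it is enough to show
\[
\|M_{\bt_j}g_j\|_{L^{p_j}(\mu)}\lesssim\|\mu\|_{n-\bt_jp_j}^{1/p_j}\|g_j\|_{L^{p_j}}
\]
for each $j$. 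The hypotheses give $p_j>1$ and $\bt_jp_j=\bt p<\al p_0<n$, so the measure condition is the same for every $j$ and is admissible.

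This single-operator bound is the classical trace inequality for the fractional maximal operator. I would prove it by dyadic reduction: replace $M_{\bt_j}$ by finitely many shifted dyadic versions. Then take the maximal dyadic cubes in each level set and combine the estimate $\mu(R)\le\|\mu\|\ell_R^{n-\bt_jp_j}$ with H\"older's inequality on each cube, obtaining a weak-type bound at every exponent in a small interval around $p_j$. Marcinkiewicz interpolation then upgrades this to the strong-type bound, as in Adams' original argument. The power of $\|\mu\|$ must come out as exactly $1/p_j$, and this is the point that requires care in the interpolation.
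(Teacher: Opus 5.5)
Your Steps 1--3 are correct and follow the paper's route: the Hedberg splitting, then a local/global decomposition of $\kM_\bt$ on Morrey spaces. Your reduction $\kM_\bt(\vec g)\le\prod_jM_{\bt_j}g_j$, followed by H\"older in $L^p(\mu)$, is also valid; it replaces the paper's multilinear H\"older inside a sparse sum.

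The gap is in Step 4. The diagonal bound $\|M_\gamma g\|_{L^s(\mu)}\lesssim\|\mu\|_{n-\gamma s}^{1/s}\|g\|_{L^s}$, with $s=p_j$ and $\gamma=\bt_j$, cannot be obtained by Marcinkiewicz interpolation. Fix $d=n-\gamma s$. The condition $\|\mu\|_d<\infty$ is dilation invariant, so any weak-type bound $L^t\to L^{r,\infty}(\mu)$ must have $r=dt/(n-\gamma t)$.

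Your maximal-cube argument gives $\mu(\{M^{\cD}_\gamma g>\lambda\})\le\|\mu\|_d\sum_i\ell_{R_i}^d$, where $\ell_{R_i}^{n-\gamma t}<\lambda^{-t}\|g\|_{L^t(R_i)}^t$. You can sum these only if $d/(n-\gamma t)\ge 1$, that is, $t\ge s$.

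For $t<s$ one has $r<t$, and the weak-type bound is then false in general. Take $\mu=\sum_k\1_{Q_k}\,{\rm d}x$ with unit cubes $Q_k$ placed super-exponentially far apart, so that $\|\mu\|_d<\infty$. Take $g=\sum_k a_k\1_{Q_k}$ with $(a_k)\in\ell^t\setminus\ell^{r,\infty}$. Since $M_\gamma g\ge a_k$ on $Q_k$, you get $\mu(\{M_\gamma g>\lambda\})\ge\#\{k:a_k>\lambda\}$, which is not $\lesssim\lambda^{-r}$.

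So $s=p_j$ is an endpoint of the range of available weak-type estimates, and interpolation yields nothing there. Adams' interpolation works only off the diagonal ($q>p$). The Hedberg step is precisely what leaves you with a diagonal estimate, so the worry about the exact power of $\|\mu\|$ is not the real issue.

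The missing idea is to use your stopping cubes directly instead of interpolating; this is what the paper does via sparse domination in Theorem~2.1. Let $\Omega_k=\{M^{\cD}_\gamma g>a^k\}$, let $Q^k_j$ be its maximal dyadic cubes, and set $E^k_j=Q^k_j\setminus\Omega_{k+1}$. Write $\langle|g|\rangle_R$ for the average of $|g|$ over $R$.

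\begin{itemize}
\item The sets $E^k_j$ are pairwise disjoint.
\item Maximality gives $\ell_{Q^k_j}^\gamma\langle|g|\rangle_{Q^k_j}\le 2^na^k$, hence $|Q^k_j\cap\Omega_{k+1}|\le 2^na^{-1}|Q^k_j|$. So $|E^k_j|\ge\frac12|Q^k_j|$ for $a=2^{n+1}$.
\item The weak $(s,s)$ bound you can already prove gives $\mu(\{M^{\cD}_\gamma g=\infty\})=0$.
\end{itemize}

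Therefore
\[
\int(M^{\cD}_\gamma g)^s\,{\rm d}\mu
\le a^s\sum_{k,j}\bigl(\ell_{Q^k_j}^{\gamma}\langle|g|\rangle_{Q^k_j}\bigr)^s\mu(E^k_j)
\le a^s\|\mu\|_{n-\gamma s}\sum_{k,j}\langle|g|\rangle_{Q^k_j}^s|Q^k_j|
\le 2a^s\|\mu\|_{n-\gamma s}\int_{\R^n}(M^{\cD}g)^s\,{\rm d}x .
\]
The $L^{s}$-boundedness of $M^{\cD}$ (here $s=p_j>1$) finishes the proof. It gives exactly the factor $\|\mu\|_{n-\gamma s}^{1/s}$, and there is no interpolation bookkeeping.
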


\begin{thm}\label{thm1.2}
Let $\mu$ be a~nonnegative Radon measure on $\R^n$. 
Let 
$\vec{P}:=(p_1,\ldots,p_m)$ with 
$p_1,\ldots,p_m\in(1, \8)$ and 
$1/p=1/p_1+\cdots+1/p_m$.
Assume that 
$0<p\le p_0<\frac{n}{\al}$, 
$0<p\le 1$ and 
$0<\bt\le\al<mn$. Set 
$\tht=\frac{n-\bt p_0}{n-\al p_0}$,
$q=\tht p$ and $q_0=\tht p_0$.
Then we have
\[
\|\kI_\al(\vec{f})\|_{\cM_{\mu}^{q,\,q_0}(\R^n)}
\lesssim
\|\mu\|_{n-\bt p}^{1/q}
\|\vec{f}\|_{\cM^{\vec{P},\,p_0}((\R^n)^m)}.
\]
\end{thm}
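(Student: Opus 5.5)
The plan is to discretize $\kI_\al$ dyadically, localize to a fixed cube $Q_0$, handle the far part by the Morrey size condition alone, and handle the near part with a trace bound for the $m$-linear fractional maximal operator combined with a Hedberg-type inequality.

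\emph{Step 1: dyadic reduction.} By the standard three-lattice argument, I would first reduce to a dyadic model: $\kI_\al(\vec{f})(x)\lesssim\sum_{t}\sum_{Q\in\cD^t}a_Q\1_Q(x)$ with $a_Q:=\ell_Q^{\al-mn}\prod_{j=1}^m\int_Q|f_j|$. Fix a cube $Q_0$, which we may take dyadic. Split $\vec f=\vec f\1_{3Q_0}+(\vec f-\vec f\1_{3Q_0})$; by multilinearity this produces $2^m$ terms.

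\emph{Step 2: far part.} Every term containing a factor $f_j\1_{(3Q_0)^c}$ only sees cubes with $\ell_Q\gtrsim\ell_{Q_0}$. For such cubes the Morrey norm gives $a_Q\le\ell_Q^{\al-n/p_0}\|\vec f\|_{\cM^{\vec P,p_0}}$, and since $\al<n/p_0$ these bounds sum geometrically to $\ell_{Q_0}^{\al-n/p_0}\|\vec f\|$, pointwise on $Q_0$. Integrating over $Q_0$ against $\mu$ and using $\mu(Q_0)\le\|\mu\|_{n-\bt p}\ell_{Q_0}^{n-\bt p}$ gives the total power of $\ell_{Q_0}$ as
\[
\tfrac{n}{q_0}-\tfrac{\bt p}{q}+\al-\tfrac{n}{p_0}
=\tfrac{n-\bt p_0}{\tht p_0}+\al-\tfrac{n}{p_0}=0 .
\]
This uses $\tht=\frac{n-\bt p_0}{n-\al p_0}$, and it yields exactly $\|\mu\|^{1/q}\|\vec f\|$.

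\emph{Step 3: local part.} For the local term $\vec f\1_{3Q_0}$ I would use a Hedberg-type inequality. Split at a height $R$. The large cubes $\ell_Q>R$ contribute at most $R^{\al-n/p_0}\|\vec f\|$, as in Step 2. The small cubes $\ell_Q\le R$ contribute $\sum_{\ell_Q\le R}a_Q\1_Q$; if $\bt<\al$ this is $\lesssim R^{\al-\bt}\kM_\bt(\vec f\1_{3Q_0})(x)$. Choosing $R$ pointwise to balance the two pieces gives
\[
\kI_\al(\vec f\1_{3Q_0})(x)\lesssim \kM_\bt(\vec f\1_{3Q_0})(x)^{1/\tht}\,\|\vec f\|^{1-1/\tht}.
\]
The exponent is $1/\tht$ because $\frac{n/p_0-\al}{n/p_0-\bt}=\frac1\tht$. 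This reduces everything to the trace bound
\[
\int_{Q_0}\kM_\bt(\vec f\1_{3Q_0})^p\,{\rm d}\mu\lesssim\|\mu\|_{n-\bt p}\prod_j\|f_j\|_{L^{p_j}(3Q_0)}^p .
\]
Because $p\le1$, this trace bound should follow from a direct covering argument. Dominate $\kM_\bt^p$ by a sum over the maximal dyadic cubes $Q$ of each level set, use $\mu(Q)\le\|\mu\|\ell_Q^{n-\bt p}$ together with $\bigl(\ell_Q^{\bt-mn}\prod_j\int_Q|f_j|\bigr)^p\le\ell_Q^{\bt p-n}\prod_j\|f_j\|_{L^{p_j}(Q)}^p$ (Hölder), and then sum over the disjoint stopping cubes. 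That last sum is controlled by Hölder with $\sum_j p/p_j=1$, which bounds it by $\prod_j\|f_j\|^p_{L^{p_j}(3Q_0)}$, and the latter is $\le|Q_0|^{1-p/p_0}\|\vec f\|^p$. Unlike the case $p>1$, no boundedness of the maximal operator is needed here, which is why $p\le1$ is assumed. Raising the Hedberg bound to the power $q=\tht p$ and inserting the trace bound again gives a power of $\ell_{Q_0}$ that cancels by the same computation as in Step 2.

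\emph{Main obstacle.} I expect the hardest point to be the endpoint $\bt=\al$, which Theorem~\ref{thm1.2} allows. There $\tht=1$, there is no geometric decay $R^{\al-\bt}$, and the Hedberg interpolation above degenerates. For this case I would avoid the pointwise splitting. Instead I would use $p\le1$ directly through $\bigl(\sum_Q a_Q\1_Q\bigr)^p\le\sum_Q a_Q^p\1_Q$, reduce the sum to a sparse family by a stopping-time selection of the cubes where the averages $a_Q$ jump, and then apply the same disjoint-cube Hölder argument on that sparse family. Whether the sparse reduction survives the power $p<1$ is the step I would check first. The fallback is to prove the case $\bt<\al$ with constants uniform as $\bt\uparrow\al$, and pass to the limit.
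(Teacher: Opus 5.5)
Your Steps 1--2 and the balancing in Step 3 for $\bt<\al$ are correct, and the exponents check out. The proof still has two genuine gaps.

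\textbf{The trace bound for $\kM_\bt$.} Your ``direct covering argument'' does not work. After $\int_{Q_0}\kM_\bt^p\,{\rm d}\mu\lesssim\sum_k\sum_{Q\in\mathcal{C}_k}2^{kp}\mu(Q)$ and H\"older on each cube, you must bound $\sum_k\sum_{Q\in\mathcal{C}_k}\prod_j\|f_j\|_{L^{p_j}(Q)}^p$.

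\begin{itemize}
\item The stopping cubes are disjoint only within a fixed level $k$. Across levels they are nested, so H\"older with $\sum_j p/p_j=1$ controls each level separately, and the sum over levels is uncontrolled.
\item This is not a technicality. Replacing $\langle|f_j|\rangle_Q$ by $\langle|f_j|^{p_j}\rangle_Q^{1/p_j}$ discards exactly the decay that makes the sum converge. Take $Q_0=[0,1)^n$ and $f_j=\1_{[0,2^{-N})^n}$ for every $j$. The nested cubes $[0,2^{-i})^n$, $0\le i\le N$, occur as stopping cubes for $\gtrsim N$ levels $k$, each time contributing exactly $2^{-Nn}=\prod_j\|f_j\|_{L^{p_j}}^p$. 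So your bound is off by a factor $N$.
\item Passing to a sparse subfamily does not help, since these cubes are already sparse.
\end{itemize}

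The missing ingredient is the $L^{p_j}$-boundedness of the dyadic maximal operator. Keep the averages, use $|S|\le\eta^{-1}|E_{\cS}(S)|$, and apply H\"older to the whole sum. Then bound
$\sum_S\langle|f_j|\1_{3Q_0}\rangle_S^{p_j}|E_{\cS}(S)|\le\|M^{\cD}(f_j\1_{3Q_0})\|_{L^{p_j}}^{p_j}\lesssim\|f_j\|_{L^{p_j}(3Q_0)}^{p_j}$, which needs $p_j>1$. This is the proof of Theorem~\ref{thm2.1}, and it works for every $0<p\le p_0$. So your explanation that $p\le1$ is what removes the need for the maximal operator is wrong.

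\textbf{The endpoint $\bt=\al$.} You leave it open, and the fallback cannot work. With your method the Hedberg constant carries the factor $(1-2^{\bt-\al})^{-1}$ from $\sum_{\ell_Q\le R}\ell_Q^{\al-\bt}$. No uniform version can exist, since $\kI_\al^{\cD}$ is not pointwise dominated by $\kM_\al^{\cD}$. At $\bt=\al$ one has $\tht=1$, and the statement \emph{is} the trace inequality $\|\kI_\al^{\cD}(\vec{f})\|_{\cM_\mu^{p,\,p_0}}\lesssim\|\mu\|_{n-\al p}^{1/p}\|\vec{f}\|_{\cM^{\vec{P},\,p_0}}$ (Theorem~\ref{thm2.2}). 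It is the core of the result, not a corner case.

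The paper proves it in three steps:
\begin{enumerate}
\item Dominate $\kI_\al^{\cD}(\vec{f})$ pointwise by $\sum_{S\in\cS}|S|^{\al/n}\prod_j\langle|f_j|\rangle_S\1_S$ for a sparse family $\cS$.
\item Use $p\le1$ to get $(\sum_S a_S\1_S)^p\le\sum_S a_S^p\1_S$. This is where $p\le1$ is really needed, because the supports are the nested cubes $S$ rather than the disjoint sets $E_{\cS}(S)$.
\item Run the maximal-function argument above for $S\subseteq Q_0$, plus a geometric tail for $S\supsetneq Q_0$.
\end{enumerate}
Your endpoint sketch has the first two ingredients but ends with the same invalid disjoint-cube H\"older step.

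Once the endpoint is available, all $\bt\le\al$ follow at once from the Hedberg inequality with $\kI_\bt$ in place of $\kM_\bt$. For dyadic $Q'\subseteq Q$ one has $\ell_{Q'}^{\al}\le\ell_Q^{\al-\bt}\ell_{Q'}^{\bt}$ term by term. Hence $\kI_\al^{\cD}(\vec{f})(x)\lesssim\ell_Q^{\al-\bt}\kI_\bt^{\cD}(\vec{f})(x)+\ell_Q^{\al-n/p_0}\|\vec{f}\|_{\cM^{\vec{P},\,p_0}}$, with no geometric sum and no degeneration at $\bt=\al$.

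Your $\kM_\bt$-based route for $\bt<\al$ is the paper's proof of Theorem~\ref{thm1.1}. It is fine once the maximal trace bound is proved correctly, but it cannot reach $\bt=\al$.
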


\begin{xrem}
The $m$-linear fractional integral operator $\kI_{\al}$
and the $m$-sublinear fractional maximal operator $\kM_{\al}$ 
are studied on weighted Morrey type spaces 
in \cite{GM1,GM2,ISST1,ISST2,ISST3}.
\end{xrem}

The letter $C$ will be used for constants that may change from one occurrence to another.
Constants with subscripts, such as $C_1$, $C_2$, do not change in different occurrences.
By $A\approx B$ we mean that 
$c^{-1}B\le A\le cB$ 
with some positive finite constant $c$ independent of appropriate quantities. 
We write $X\lesssim Y$, $Y\gtrsim X$ 
if there is a independent constant $c$ such that $X \le cY$. 

\section{Proof of Theorems \ref{thm1.1} and \ref{thm1.2}}\label{sec2}
In what follows,
we prove Theorems \ref{thm1.1} and \ref{thm1.2}
using sparse technologies in weighted harmonic analysis.
By a standard argument,
it suffices to verify the theorems for dyadic cubes.

Denote by $\cD$ the family of all dyadic cubes 
\[
\cD
:=
\{2^{-k}(m+[0,1)^n):\,
k\in\Z,\,m\in\Z^n\}.
\]
Let $0<\eta<1$.
We say that a family 
$\cS\subset\cD$ is \textit{$\eta$-sparse} 
if for every $S\in\cS$, 
there exists a measurable set 
$E_{\cS}(S)\subset S$ such that 
$|E_{\cS}(S)|\ge\eta|S|$, 
and the sets 
$\{E_{\cS}(S):\,S\in\cS\}$ 
are pairwise disjoint. 

Let $m\in\N$ and $\al\in[0, mn)$. 
For any vector-valued function
$\vec{f}:=(f_1,\ldots,f_m)$
with each $f_i\ge 0$ being a~locally integrable function in $\R^n$,
it is known that the operators 
$\kM_{\al}$ and $\kI_{\al}$ 
admit the following discretizations:
for any $x\in\R^n$,
\begin{align*}
\kM_{\al}(\vec{f})(x)
&\lesssim
\sup_{Q\in\cD}
\ell_{Q}^{\al}
\prod_{j=1}^m
\frac{1}{|Q|}\int_{3Q}f_j(y)\,{\rm d}y\,
\1_{Q}(x),
\\
\kI_{\al}(\vec{f})(x)
&\lesssim
\sum_{Q\in\cD}
\ell_{Q}^{\al}
\prod_{j=1}^m
\frac{1}{|Q|}\int_{3Q}f_j(y)\,{\rm d}y\,
\1_{Q}(x),
\end{align*}
where 
$3Q$ denotes the cube with the same center as $Q$ 
with triple side length and 
$\1_{E}$ denotes the characteristic function 
of a~measurable set $E\subset\R^n$.
The first estimate follows from a simple geometric consideration for cubes, while the second one is proved in 
\cite[Section 4]{Moen09} and 
\cite[Lemma 2.1]{ISST3}.
Therefore, 
through the dyadic grid argument,
in order to prove Theorems \ref{thm1.1} and \ref{thm1.2}, 
it suffices to establish 
the estimates for the following 
dyadic discrete-type operators.
For any $x\in\R^n$, define
\begin{align*}
\kM_{\al}^{\cD}(\vec{f})(x)
&:=
\sup_{Q\in\cD}
\ell_{Q}^{\al}
\prod_{j=1}^m
\frac{1}{|Q|}\int_{Q}|f_j(y)|\,{\rm d}y\,
\1_{Q}(x)
\\ &=
\sup_{Q\in\cD}
\ell_{Q}^{\al-mn}
\prod_{j=1}^m
\int_{Q}|f_j(y)|\,{\rm d}y\,
\1_{Q}(x),
\\ \kI_{\al}^{\cD}(\vec{f})(x)
&:=
\sum_{Q\in\cD}
\ell_{Q}^{\al}
\prod_{j=1}^m
\frac{1}{|Q|}\int_{Q}f_j(y)\,{\rm d}y\,
\1_{Q}(x)
\\ &=
\sum_{Q\in\cD}
\ell_{Q}^{\al-mn}
\prod_{j=1}^m
\int_{Q}f_j(y)\,{\rm d}y\,
\1_{Q}(x).
\end{align*}

Given $Q\in\cD$ and 
$\cG\subset\cD$, we write
$\cG|_{Q}
:=
\{R\in\cG:\,R\subseteq Q\}$: 
the restriction to $Q$ of $\cG$.

\begin{thm}\label{thm2.1}
Let $\mu$ be a~nonnegative Radon measure on $\R^n$. 
Let $\vec{P}:=(p_1,\ldots,p_m)$ with 
$p_1,\ldots,p_m\in(1, \8)$ and 
$1/p=1/p_1+\cdots+1/p_m$.
Assume that 
$0<p\le p_0<\frac{n}{\al}$, 
$0<\al<mn$. 
Then we have
\[
\|\kM_{\al}^{\cD}(\vec{f})\|_{\cM_{\mu}^{p,\,p_0}(\R^n)}
\lesssim
\|\mu\|_{n-\al p}^{1/p}
\|\vec{f}\|_{\cM^{\vec{P},\,p_0}((\R^n)^m)}.
\]
\end{thm}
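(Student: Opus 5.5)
We fix a dyadic cube $Q_0\in\cD$ and aim to show
\[
\lt(\int_{Q_0}\kM_{\al}^{\cD}(\vec{f})^p\,{\rm d}\mu\rt)^{1/p}
\lesssim
\|\mu\|_{n-\al p}^{1/p}|Q_0|^{1/p-1/p_0}\|\vec{f}\|_{\cM^{\vec{P},\,p_0}}.
\]
We may assume each $f_j\ge0$. For $x\in Q_0$ the dyadic cubes containing $x$ are either contained in $Q_0$ or strictly contain $Q_0$, so we split $\kM_{\al}^{\cD}(\vec f)\1_{Q_0}\le \kM_{\rm in}+\kM_{\rm out}$, according to whether the supremum runs over $Q\subseteq Q_0$ or over $Q\supsetneq Q_0$. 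We estimate the two parts separately.

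\emph{The outer part.} For $Q\supsetneq Q_0$, Hölder's inequality gives $\prod_j\frac1{|Q|}\int_Q f_j\le \prod_j(\frac1{|Q|}\int_Q f_j^{p_j})^{1/p_j}\le |Q|^{-1/p_0}\|\vec f\|$. Hence $\ell_Q^{\al}\prod_j\frac1{|Q|}\int_Qf_j\le \ell_Q^{\al-n/p_0}\|\vec f\|\le \ell_{Q_0}^{\al-n/p_0}\|\vec f\|$, where the last step uses $\al<n/p_0$. So $\kM_{\rm out}$ is bounded by a constant on $Q_0$. Integrating over $Q_0$ and using $\mu(Q_0)\le\|\mu\|_{n-\al p}\ell_{Q_0}^{n-\al p}$ gives the exponent $\ell_{Q_0}^{\al-n/p_0+n/p-\al}=|Q_0|^{1/p-1/p_0}$, as required.

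\emph{The inner part (main step).} Bounding $(\sup)^p$ by a sum over all $Q\subseteq Q_0$ fails, since that sum diverges, so we first pass to a sparse family. We build principal cubes: start with $\cS_0=\{Q_0\}$. Given $S\in\cS$, its $\cS$-children are the maximal dyadic $S'\subsetneq S$ with
\[
\prod_j\langle f_j\rangle_{S'}>A\prod_j\langle f_j\rangle_S,
\qquad A=2^{m(n+1)},
\]
where $\langle f\rangle_S$ denotes the average of $f$ over $S$. By maximality and the geometric mean, each child satisfies $|S'|\le A^{-1/m}\prod_j(\int_{S'}f_j/\langle f_j\rangle_S)^{1/m}$. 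Summing over children and applying Hölder's inequality in the sum with exponents $m$ gives $\sum|S'|\le A^{-1/m}|S|\le\frac12|S|$. Hence $\cS$ is $\frac12$-sparse with $E_\cS(S)=S\setminus\bigcup S'$.

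Every $Q\subseteq Q_0$ has a minimal principal ancestor $S\supseteq Q$ with $\prod_j\langle f_j\rangle_Q\le A\prod_j\langle f_j\rangle_S$. Since $\al>0$, $\ell_Q^\al\le\ell_S^\al$, so
\[
\kM_{\rm in}\lesssim\sup_{S\in\cS}\ell_S^{\al}\prod_j\langle f_j\rangle_S\1_S .
\]
Now $(\sup)^p\le\sum$, and with $\mu(S)\ell_S^{\al p}\le\|\mu\|_{n-\al p}|S|$ we get
\[
\int_{Q_0}\kM_{\rm in}^p\,{\rm d}\mu\lesssim\|\mu\|_{n-\al p}\sum_{S\in\cS}|S|\prod_j\langle f_j\rangle_S^p .
\]

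\emph{Closing the estimate.} By sparseness, $|S|\le2|E_\cS(S)|$, and on $E_\cS(S)\subset S$ we have $\langle f_j\rangle_S\le M(f_j\1_{Q_0})$. Since the sets $E_\cS(S)$ are pairwise disjoint, the sum is at most
\[
2\int_{Q_0}\prod_jM(f_j\1_{Q_0})^p\,{\rm d}x .
\]
Hölder's inequality with $\sum_jp/p_j=1$ and the $L^{p_j}$-boundedness of $M$ (valid because $p_j>1$) bound this by $C\prod_j\|f_j\|_{L^{p_j}(Q_0)}^p\le C|Q_0|^{1-p/p_0}\|\vec f\|^p$. This holds even when $p\le1$. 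Combining this with the outer part proves Theorem \ref{thm2.1}.

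The main obstacle is the sparse domination of the inner part. In particular, one must check that the multilinear stopping condition on $\prod_j\langle f_j\rangle$ still produces a sparse family; this is exactly where the Hölder-in-the-sum argument with exponents $m$ is needed.
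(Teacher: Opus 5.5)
Your proof is correct and follows essentially the same route as the paper: fix a dyadic $Q_0$, pass to a sparse family for the cubes inside $Q_0$, then use $\mu(S)\le\|\mu\|_{n-\al p}\ell_S^{n-\al p}$, $|S|\lesssim|E_\cS(S)|$, H\"older with $\sum_j p/p_j=1$ and the $L^{p_j}$-boundedness of the maximal operator, and handle the cubes above $Q_0$ with H\"older plus $\al<n/p_0$. The differences are minor: you build the principal-cube sparse family inside $Q_0$ yourself instead of citing Cruz-Uribe's global sparse bound with $\1_{E_\cS(S)}$, and you bound the contribution of the cubes $Q\supsetneq Q_0$ pointwise by the single term $\ell_{Q_0}^{\al-n/p_0}\|\vec f\|$ rather than summing a geometric series.
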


\noindent\textbf{Proof}\quad
We first observe that, 
for some appropriate $\eta$-sparse family
$\cS\subset\cD$,
\[
\kM_{\al}^{\cD}(\vec{f})
\lesssim
\sum_{S\in\cS}
|S|^{\al/n}
\prod_{j=1}^m
\frac{1}{|S|}\int_{S}|f_j(y)|\,{\rm d}y\,
\1_{E_{\cS}(S)},
\]
which can be verified by the argument in 
\cite[Proposition 3.8]{Cr}.

Fix $Q_0\in\cD$,
we decompose
\begin{align*}
\lefteqn{
\sum_{S\in\cS}
|S|^{\al/n}
\prod_{j=1}^m
\frac{1}{|S|}\int_{S}|f_j(y)|\,{\rm d}y\,
\1_{E_{\cS}(S)}
}\\ &=
\lt(
\sum_{S\in\cS|_{Q_0}}
+
\sum_{S\in\cS:\,S\supsetneq Q_0}
\rt)
|S|^{\al/n}
\prod_{j=1}^m
\frac{1}{|S|}\int_{S}|f_j(y)|\,{\rm d}y\,
\1_{E_{\cS}(S)}
\\ &=:
F_1+F_2.
\end{align*}

Since the sets 
$\{E_{\cS}(S):\,S\in\cS\}$ 
are pairwise disjoint,
\begin{align*}
\int_{Q_0}
F_1^p
\,{\rm d}\mu
&=
\sum_{S\in\cS|_{Q_0}}
\lt(
|S|^{\al/n}
\prod_{j=1}^m
\frac{1}{|S|}\int_{S}|f_j(y)|\,{\rm d}y
\rt)^p\mu(E_{\cS}(S))
\\ &\le
\sum_{S\in\cS|_{Q_0}}
\lt(
|S|^{\al/n}
\prod_{j=1}^m
\frac{1}{|S|}\int_{S}|f_j(y)|\,{\rm d}y
\rt)^p\mu(S)
\\ &\le
\|\mu\|_{n-\al p}
\sum_{S\in\cS|_{Q_0}}
\lt(
\prod_{j=1}^m
\frac{1}{|S|}\int_{S}|f_j(y)|\,{\rm d}y
\rt)^p|S|
\\ &\le
\frac{\|\mu\|_{n-\al p}}{\eta}
\sum_{S\in\cS|_{Q_0}}
\lt(
\prod_{j=1}^m
\frac{1}{|S|}\int_{S}|f_j(y)|\,{\rm d}y
\rt)^p|E_{\cS}(S)|.
\end{align*}
Applying the multiple H\"{o}lder's inequality 
with 
$p/p_1+\cdots+p/p_m=1$,
\begin{align*}
\lefteqn{
\le
\frac{\|\mu\|_{n-\al p}}{\eta}
\prod_{j=1}^m
\lt(
\sum_{S\in\cS|_{Q_0}}
\lt(\frac{1}{|S|}\int_{S}|f_j(y)|\,{\rm d}y\rt)^{p_j}
|E_{\cS}(S)|
\rt)^{p/p_j}} \\
&\le
\frac{\|\mu\|_{n-\al p}}{\eta}
\prod_{j=1}^m
\|M^{\cD}(f_j\1_{Q_0})\|_{L^{p_j}(Q_0)}^p
\\ &\lesssim
\frac{\|\mu\|_{n-\al p}}{\eta}
\prod_{j=1}^m
\|f_j\|_{L^{p_j}(Q_0)}^p,
\end{align*}
where we have used 
the $L^{p_j}$-boundedness of dyadic Hardy-Littlewood maximal operator 
$M^{\cD}$.
These yield
\begin{align*}
|Q_0|^{1/p_0-1/p}
\lt(
\int_{Q_0}
F_1^p
\,{\rm d}\mu\rt)^{1/p}
&\lesssim
\lt(\frac{\|\mu\|_{n-\al p}}{\eta}\rt)^{1/p}
|Q_0|^{1/p_0-1/p}
\prod_{j=1}^m
\|f_j\|_{L^{p_j}(Q_0)} 
\\ &\le
\lt(\frac{\|\mu\|_{n-\al p}}{\eta}\rt)^{1/p}
\|\vec{f}\|_{\cM^{\vec{P},\,p_0}((\R^n)^m)}.
\end{align*}

For $F_2$, we have that
\begin{align*}
\lefteqn{
|Q_0|^{1/p_0-1/p}
\lt(
\int_{Q_0}
F_2^p
\,{\rm d}\mu
\rt)^{1/p}}
\\ &=
|Q_0|^{1/p_0-1/p}
\lt[
\sum_{S\supsetneq Q_0}
\lt(
|S|^{\al/n}
\prod_{j=1}^m
\frac{1}{|S|}\int_{S}|f_j(y)|\,{\rm d}y
\rt)^p
\mu(Q_0\cap E_{\cS}(S))
\rt]^{1/p}
\\ &\le
|Q_0|^{1/p_0-1/p}
\lt[
\sum_{S\supsetneq Q_0}
\lt(
|S|^{\al/n}
\prod_{j=1}^m
\lt(
\frac{1}{|S|}\int_{S}|f_j(y)|^{p_j}\,{\rm d}y
\rt)^{1/p_j}
\rt)^p
\|\mu\|_{n-\al p}
|Q_0|^{1-\frac{\al p}{n}}
\rt]^{1/p}
\\ &\le
|Q_0|^{1/p_0-\al/n}
\|\mu\|_{n-\al p}^{1/p}
\|\vec{f}\|_{L^{\vec{P},\,p_0}((\R^n)^{m})}
\lt[
\sum_{S\supsetneq Q_0}
\lt(
|S|^{\al/n-1/p_0}
\rt)^p
\rt]^{1/p}
\\ &\lesssim
\|\mu\|_{n-\al p}^{1/p}
\|\vec{f}\|_{\cM^{\vec{P},\,p_0}((\R^n)^{m})},
\end{align*}
where we have used 
the formula for the sum of a geometric sequence 
with $\al/n-1/p_0<0$.
Taking supremum over all $Q_0\in\cD$ 
in the left-hand sides, 
we obtain the theorem.
\qed

\begin{thm}\label{thm2.2}
Let $\mu$ be a~nonnegative Radon measure on $\R^n$. 
Let 
$\vec{P}:=(p_1,\ldots,p_m)$ with 
$p_1,\ldots,p_m\in(1, \8)$ and 
$1/p=1/p_1+\cdots+1/p_m$.
Assume that 
$0<p\le p_0<\frac{n}{\al}$, 
$0<p\le 1$ and $0<\al<mn$. 
Then we have
\[
\|\kI_{\al}^{\cD}(\vec{f})\|_{\cM_{\mu}^{p,\,p_0}(\R^n)}
\lesssim
\|\mu\|_{n-\al p}^{1/p}
\|\vec{f}\|_{\cM^{\vec{P},\,p_0}((\R^n)^m)}.
\]
\end{thm}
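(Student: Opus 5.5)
Since $0<p\le 1$, the map $t\mapsto t^p$ is subadditive. So, for $\kI_\al^{\cD}$, the $p$-th power of the sum is bounded by the sum of $p$-th powers:
\[
\kI_{\al}^{\cD}(\vec{f})(x)^p\le\sum_{Q\in\cD}\Big(\ell_Q^{\al}\prod_{j=1}^m\frac{1}{|Q|}\int_Q|f_j|\Big)^p\1_Q(x).
\]
I would fix $Q_0\in\cD$ and integrate this over $Q_0$ against $\mu$. No sparse domination is needed; subadditivity replaces it. I then split the sum, as in the proof of Theorem \ref{thm2.1}, into the cubes $Q\subseteq Q_0$ (term $G_1$) and the cubes $Q\supsetneq Q_0$ (term $G_2$). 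Cubes disjoint from $Q_0$ contribute nothing.

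\textbf{The term $G_2$.} For $Q\supsetneq Q_0$ we have $\mu(Q\cap Q_0)\le\mu(Q_0)\le\|\mu\|_{n-\al p}|Q_0|^{1-\al p/n}$. H\"older's inequality in each factor gives
\[
\prod_j\frac{1}{|Q|}\int_Q|f_j|\le|Q|^{-1/p_0}\|\vec f\|_{\cM^{\vec P,p_0}}.
\]
Hence $G_2$ is at most $\|\mu\|_{n-\al p}|Q_0|^{1-\al p/n}\|\vec f\|^p\sum_{Q\supsetneq Q_0}|Q|^{(\al/n-1/p_0)p}$. This is a geometric series because $\al p_0<n$, and it sums to $C|Q_0|^{(\al/n-1/p_0)p}$. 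Multiplying by $|Q_0|^{p/p_0-1}$ gives a bound of $\|\mu\|_{n-\al p}\|\vec f\|^p$, exactly as for $F_2$ in Theorem \ref{thm2.1}.

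\textbf{The term $G_1$: the main obstacle.} Here
\[
G_1=\sum_{Q\subseteq Q_0}\ell_Q^{\al p}\Big(\prod_j\langle |f_j|\rangle_Q\Big)^p\mu(Q)\le\|\mu\|_{n-\al p}\sum_{Q\subseteq Q_0}\Big(\prod_j\langle |f_j|\rangle_Q\Big)^p|Q|,
\]
and we must bound $\sum_{Q\subseteq Q_0}(\prod_j\langle f_j\rangle_Q)^p|Q|$ by $\prod_j\|f_j\|_{L^{p_j}(Q_0)}^p$. This is a Carleson-type sum over all dyadic cubes, not a sparse one, so the argument of Theorem \ref{thm2.1} does not apply directly.

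My plan is to replace $\vec f$ by a slightly better family. I fix $1<r_j<p_j$ with $r_j$ close to $1$, set $1/r=\sum 1/r_j$, and use the pointwise bound
\[
\prod_j\langle f_j\rangle_Q\le\prod_j\langle f_j^{r_j}\rangle_Q^{1/r_j}.
\]
I then run the stopping-time (Calder\'on--Zygmund) decomposition of $\mathcal D|_{Q_0}$ relative to $g=\prod_j\langle f_j^{r_j}\rangle^{1/r_j}$. This produces a sparse family $\cS$ such that for $Q$ with stopping parent $S$ we have $g_Q\lesssim g_S$. The number of cubes per layer is not bounded, however, so I would rather exploit the extra room in the exponents.

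The concrete route is to compare with the $L^1$ sum $\sum_{Q\subseteq Q_0}\langle h\rangle_Q^{s}|Q|$. For a single function $h$ and any $s>1$ this sum is $\lesssim\|h\|_{L^s}^s$: writing it as $\int\sum_{Q\ni x}\langle h\rangle_Q^s\,dx$ only gives $\sup$-type control, so this needs checking. Alternatively I would use the level-set decomposition $\{\langle \cdot\rangle_Q\sim2^k\}$ and bound the measure of each layer by weak-type estimates for $M^{\cD}$.

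If the full-sum estimate genuinely fails, I would fall back on the sparse domination $\kI_\al^{\cD}(\vec f)\lesssim\sum_{S\in\cS}\ell_S^\al\prod_j\langle f_j\rangle_S\1_S$, which holds for nonnegative $f_j$ and is valid for $\kI_\al$ (as in \cite{Moen09}). I would then apply $p\le1$ subadditivity to this sparse sum. After that, $G_1$ is handled exactly as $F_1$ in Theorem \ref{thm2.1}, using $|S|\le\eta^{-1}|E_\cS(S)|$, the multiple H\"older inequality, and the $L^{p_j}$-boundedness of $M^{\cD}$. I expect this sparse route to be the one that works. Finally, taking the supremum over $Q_0$ yields the theorem.
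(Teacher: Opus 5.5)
Your fallback route is exactly the paper's proof, and it is correct. Dominate $\kI_\al^{\cD}(\vec f)$ pointwise by $\sum_{S\in\cS}\ell_S^{\al}\prod_j\langle|f_j|\rangle_S\1_S$. Then use $p\le1$ to get $\int_{Q_0}F_1^p\,{\rm d}\mu\le\sum_{S\in\cS|_{Q_0}}\bigl(\ell_S^{\al}\prod_j\langle|f_j|\rangle_S\bigr)^p\mu(S)$, which is the second line of the $F_1$ chain in Theorem~\ref{thm2.1}. Finally, handle $S\supsetneq Q_0$ by the geometric series, as you did for $G_2$.

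You should drop the route you lead with: the claim that subadditivity alone replaces sparse domination fails as you carry it out. Since $p\le1<p_j$ forces $m\ge2$, take $f_j=\1_{Q_0}$ for all $j$. Then
\[
\sum_{Q\in\cD,\,Q\subseteq Q_0}\bigl(\prod_{j=1}^m\langle f_j\rangle_Q\bigr)^p|Q|
=\sum_{k\ge0}|Q_0|=\infty
\quad\text{while}\quad
\prod_{j=1}^m\|f_j\|_{L^{p_j}(Q_0)}^p=|Q_0|.
\]
The same example refutes the single-function claim $\sum_{Q\subseteq Q_0}\langle h\rangle_Q^s|Q|\lesssim\|h\|_{L^s}^s$.

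The loss happens when you bound $\mu(Q)\le\|\mu\|_{n-\al p}\ell_Q^{n-\al p}$ cube by cube, which discards the factor $\ell_Q^{\al p}$. That factor is what tames the unboundedly many cubes per stopping layer that made you abandon your stopping-time idea. If $Q$ has stopping parent $S$, then $\prod_j\langle f_j\rangle_Q\lesssim\prod_j\langle f_j\rangle_S$. Moreover $\sum_{Q\subseteq S}\ell_Q^{\al p}\mu(Q)\lesssim\ell_S^{\al p}\mu(S)\le\|\mu\|_{n-\al p}|S|$. Together these reduce $G_1$ to the sparse sum, with no auxiliary exponents $r_j$ needed. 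This geometric summation inside stopping cubes is exactly what the sparse domination encodes, so that step is essential, not optional.
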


\noindent\textbf{Proof}\quad
We observe that, 
for some appropriate $\eta$-sparse family
$\cS\subset\cD$,
\[
\kI_{\al}^{\cD}(\vec{f})
\lesssim
\sum_{S\in\cS}
|S|^{\al/n}
\prod_{j=1}^m
\frac{1}{|S|}\int_{S}|f_j(y)|\,{\rm d}y\,
\1_{S},
\]
which can be verified by the argument in 
\cite[Proposition 3.9]{Cr}.

Fix $Q_0\in\cD$,
we decompose
\begin{align*}
\sum_{S\in\cS}
|S|^{\al/n}
\prod_{j=1}^m
\frac{1}{|S|}\int_{S}|f_j(y)|\,{\rm d}y\,
\1_{S}
&=
\lt(
\sum_{S\in\cS|_{Q_0}}
+
\sum_{S\in\cS:\,S\supsetneq Q_0}
\rt)
|S|^{\al/n}
\prod_{j=1}^m
\frac{1}{|S|}\int_{S}|f_j(y)|\,{\rm d}y\,
\1_{S}
\\ &=:
F_1+F_2.
\end{align*}
Thanks to $0<p\le 1$, we have that 
\[
F_1^p
=
\lt(
\sum_{S\in\cS|_{Q_0}}
|S|^{\al/n}
\prod_{j=1}^m
\frac{1}{|S|}\int_{S}|f_j(y)|\,{\rm d}y\,
\1_{S}
\rt)^p
\le
\sum_{S\in\cS|_{Q_0}}
\lt(
|S|^{\al/n}
\prod_{j=1}^m
\frac{1}{|S|}\int_{S}|f_j(y)|\,{\rm d}y
\rt)^p
\1_{S}.
\]
Thus, the remainder of the proof 
is almost same as that of 
Theorem \ref{thm2.1}.
\qed

\vspace{2mm}

\noindent\textbf{Proof of Theorem \ref{thm1.1}}\quad
We use the Hedberg method (\cite{He}).
Assume that
$\|\vec{f}\|_{\cM^{\vec{P},\,p_0}((\R^n)^m)}=1$.
For any $0<\bt<\al<mn$ and 
$x\in Q\in\cD$,
by a~telescoping argument, 
one sees that
\[
|\kI_{\al}^{\cD}(\vec{f})(x)|
\lesssim
\ell_{Q}^{\al-\bt}
\kM_{\bt}^{\cD}(\vec{f})(x)
+
\ell_{Q}^{\al-n/p_0}.
\]
This yields 
\[
|\kI_{\al}^{\cD}(\vec{f})(x)|
\lesssim
\inf_{t>0}
\lt(
t^{\al-\bt}
\kM_{\bt}^{\cD}(\vec{f})(x)
+
t^{\al-n/p_0}
\rt).
\]
Solving the equation
\[
t^{\al-\bt}
\kM_{\bt}^{\cD}(\vec{f})(x)
=
t^{\al-n/p_0},
\]
we have that
\[
|\kI_{\al}^{\cD}(\vec{f})(x)|
\lesssim
\kM_{\bt}^{\cD}(\vec{f})(x)^{
\frac{n-\al p_0}{n-\bt p_0}
}
=
\kM_{\bt}^{\cD}(\vec{f})(x)^{1/\tht}.
\]
It follows that
\[
\|\kI_{\al}^{\cD}(\vec{f})\|_{\cM_{\mu}^{q,\,q_0}(\R^n)}
\lesssim
\|\kM_{\bt}^{\cD}(\vec{f})\|_{\cM_{\mu}^{p,\,p_0}(\R^n)}^{1/\tht}
\lesssim
\|\mu\|_{n-\bt p}^{1/q},
\]
where we have used Theorem \ref{thm2.1}.
\qed

\vspace{2mm}

\noindent\textbf{Proof of Theorem \ref{thm1.2}}\quad
Assume that
$\|\vec{f}\|_{\cM^{\vec{P},\,p_0}((\R^n)^m)}=1$.
For any $0<\bt\le\al<mn$ and 
$x\in Q\in\cD$, one sees that
\[
|\kI_{\al}^{\cD}(\vec{f})(x)|
\lesssim
\ell_{Q}^{\al-\bt}
|\kI_{\bt}^{\cD}(\vec{f})(x)|
+
\ell_{Q}^{\al-n/p_0}.
\]
This yields 
\[
|\kI_{\al}^{\cD}(\vec{f})(x)|
\lesssim
\inf_{t>0}
\lt(
t^{\al-\bt}
|\kI_{\bt}^{\cD}(\vec{f})(x)|
+
t^{\al-n/p_0}
\rt).
\]
Solving the equation
\[
t^{\al-\bt}
|\kI_{\bt}^{\cD}(\vec{f})(x)|
=
t^{\al-n/p_0},
\]
we have that
\[
|\kI_{\al}^{\cD}(\vec{f})(x)|
\lesssim
|\kI_{\bt}^{\cD}(\vec{f})(x)|^{
\frac{n-\al p_0}{n-\bt p_0}
}
=
|\kI_{\bt}^{\cD}(\vec{f})(x)|^{1/\tht}.
\]
It follows that
\[
\|\kI_{\al}^{\cD}(\vec{f})\|_{\cM_{\mu}^{q,\,q_0}(\R^n)}
\lesssim
\|\kI_{\bt}^{\cD}(\vec{f})\|_{\cM_{\mu}^{p,\,p_0}(\R^n)}^{1/\tht}
\lesssim
\|\mu\|_{n-\bt p}^{1/q},
\]
where we have used Theorem \ref{thm2.2}.
\qed

\subsection*{Ethics approval and consent to participate}
Not applicable.
\subsection*{Consent for publication}
Not applicable.
\subsection*{Competing Interests}
The authors have no competing interests to declare that are relevant to the content of this article.
\subsection*{Author contributions}
The authors contributed equally to the correctness of this paper.
\subsection*{Funding}
The first named author is financially supported by 
the Grant-in-Aid for JSPS Fellows (No. 25KJ0222).
The second named author is supported by 
Grant-in-Aid for Research Activity Start-up (25K23330)
The third named author is supported by 
Grant-in-Aid for Scientific Research (C) (23K03171), 
the Japan Society for the Promotion of Science. 
The forth named author is supported by 
Grant-in-Aid for Scientific Research (C) (15K04918 and 19K03510), 
the Japan Society for the Promotion of Science.


\begin{thebibliography}{999}

\bibitem{Ad} D.~R.~Adams, 
\emph{Traces of potentials arising from translation invariant operators},
Ann. Scuola Norm. Sup. Pisa Cl. Sci. (3) \textbf{25} (1971), 203--217.

\bibitem{CF}
F.~Chiarenza and M.~Frasca, 
\emph{Morrey spaces and Hardy-Littlewood maximal function}, 
Rend. Math. Appl., \textbf{7} (1987), 273--279.

\bibitem{Cr} D.~Cruz-Uribe, SFO,
\emph{Two weight norm inequalities for fractional integral operators and commutators},
arXiv:1412.4157v1 [math.CA] 12 Dec 2014.

\bibitem{GM1} L.~Grafakos and A.~Meskhi, 
\emph{Sharp Olsen's inequality for multilinear Riesz potentials}, 
Trans. A.~Razmadze Math. Inst. \textbf{175} (2021), no. 3, 433--436.

\bibitem{GM2} \bysame,
\emph{On sharp Olsen's and trace inequalities for multilinear fractional integrals}, 
Potential Anal. \textbf{59} (2023), no. 3, 1039--1050.

\bibitem{GT} D.~Gilbarg and S.~N.~Trudinger,
\emph{{\it Elliptic Partial Differential Equations of Second Order}},
2nd edn. (Springer, Berlin, 1983).

\bibitem{He} L.~I.~Hedberg, 
\emph{On certain convolution inequalities}, 
Proc. Amer. Math. Soc. \textbf{36} (1972), 505--510.

\bibitem{ISST1} T.~Iida, E.~Sato, Y.~Sawano and H.~Tanaka, 
\emph{Weighted norm inequalities for multilinear fractional operators on Morrey spaces},
Studia Math. \textbf{205} (2011), no. 2, 139--170.

\bibitem{ISST2} \bysame,
\emph{Multilinear fractional integrals on Morrey spaces}, 
Acta Math. Sin. (Engl. Ser.) \textbf{28} (2012), no. 7, 1375--1384.

\bibitem{ISST3} \bysame,
\emph{Sharp bounds for multilinear fractional integral operators on Morrey type spaces}, 
Positivity \textbf{16} (2012), no. 2, 339--358.

\bibitem{Moen09}
K.~Moen,
\emph{Weighted inequalities for multilinear fractional integral operators},
Collect. Math. \textbf{60}, 213--238 (2009).

\end{thebibliography}
\end{document}